\newcommand{\ve}{{\bf e}}
\newcommand{\vw}{{\bf w}}
\newcommand{\vx}{{\bf x}}
\newcommand{\vy}{{\bf y}}
\newcommand{\vz}{{\bf z}}
\newcommand{\R}{\mathbb{R}}
\newcommand{\A}{\mathcal{A}}
\newcommand{\I}{\mathcal{I}}
\newcommand{\B}{\mathcal{B}}
\newtheorem{Thm}{Theorem}[section]
\newtheorem{Def}[Thm]{Definition}
\newtheorem{Prop}[Thm]{Proposition}
\newtheorem{Cor}[Thm]{Corollary}
\newtheorem{Conj}[Thm]{Conjecture}
\begin{document}
	
	\title{Sum of Squares Decompositions for Structured Biquadratic Forms}
    \author{Yi Xu\footnote{School of Mathematics, Southeast University, Nanjing  211189, China. Nanjing Center for Applied Mathematics, Nanjing 211135,  China. Jiangsu Provincial Scientific Research Center of Applied Mathematics, Nanjing 211189, China. ({\tt yi.xu1983@hotmail.com})}
		\and
		Chunfeng Cui\footnote{School of Mathematical Sciences, Beihang University, Beijing  100191, China.
			({\tt chunfengcui@buaa.edu.cn})}
		\and {and \
			Liqun Qi\footnote{
				Department of Applied Mathematics, The Hong Kong Polytechnic University, Hung Hom, Kowloon, Hong Kong.
				({\tt maqilq@polyu.edu.hk})}
		}
	}
	
	\date{\today}
	\maketitle
	
	\begin{abstract}
		This paper studies sum-of-squares (SOS) representations for structured biquadratic forms. We prove that diagonally dominated symmetric biquadratic tensors are always SOS. For the special case of symmetric biquadratic forms, we establish necessary and sufficient conditions for positive semi-definiteness of monic symmetric biquadratic forms, characterize the geometry of the corresponding PSD cone as a convex polyhedron, and prove that every such PSD form is SOS for any dimensions $m$ and $n$. We also formulate conjectures regarding SOS representations for symmetric M-biquadratic tensors and symmetric $\mathrm{B}_{0}$-biquadratic tensors, discussing their likelihood and potential proof strategies. Our results advance the understanding of when positive semi-definiteness implies sum-of-squares decompositions for structured biquadratic forms.
		
		\medskip
		
		\textbf{Keywords.} Biquadratic forms, sum-of-squares, positive semi-definiteness, M-eigenvalues, symmetric bilinear forms.
		
		\medskip
		\textbf{AMS subject classifications.} 11E25, 12D15, 14P10, 15A69, 90C23.
	\end{abstract}
	
	\section{Introduction}
	
	A fundamental question at the intersection of algebra and optimization is whether a multivariate polynomial that is nonnegative everywhere (positive semi-definite, or PSD) can be written as a sum of squares (SOS) of polynomials. Hilbert \cite{Hi88} famously proved that this is not always true, except in a few special cases. This dichotomy between nonnegativity and a structured SOS representation has profound implications, particularly because the existence of an SOS decomposition makes the problem of verifying nonnegativity computationally tractable via semidefinite programming.
	
	While the general problem remains open, significant research has focused on identifying broad classes of PSD polynomials and tensors that are guaranteed to be SOS. For instance, many structured tensors are known to be SOS, including even-order weakly diagonally dominant tensors, symmetric M-tensors, and several other families mentioned in \cite{QL17}. This paper focuses on a specific and important class of polynomials: \textbf{biquadratic forms}.
	
	An $m \times n$ biquadratic form is a polynomial in two sets of variables, $\mathbf{x} \in \mathbb{R}^m$ and $\mathbf{y} \in \mathbb{R}^n$, that is quadratic in $\mathbf{x}$ when $\mathbf{y}$ is fixed, and quadratic in $\mathbf{y}$ when $\mathbf{x}$ is fixed. Such a form can be represented by a fourth-order tensor $\mathcal{A} = (a_{ijkl})$ as:
	\[
	P(\mathbf{x}, \mathbf{y}) = \sum_{i,k=1}^{m} \sum_{j,l=1}^{n} a_{ijkl} x_i x_k y_j y_l.
	\]
	A PSD biquadratic form is one for which $P(\mathbf{x}, \mathbf{y}) \geq 0$ for all $\mathbf{x}, \mathbf{y}$. It is SOS if it can be written as a finite sum of squares of bilinear forms.
	
	The question of whether every PSD biquadratic form is SOS was settled negatively by Choi \cite{Ch75}, who provided an explicit $3 \times 3$ counterexample. However, recent work \cite{QCCX25} has shown that several important subclasses, such as weakly completely positive biquadratic tensors, are indeed SOS. This positive result raises a natural and important question: \textbf{What other structured PSD biquadratic tensors are SOS?}
	
	In particular, the authors of \cite{QC25} recently identified and analyzed several new classes of PSD biquadratic tensors, including \textbf{diagonally dominated symmetric}, \textbf{symmetric M-}, and \textbf{symmetric $\mathrm{B}_{0}$-biquadratic tensors}. A central question, previously unanswered, is whether the PSD tensors in these classes also admit SOS representations.
	
	This paper provides a comprehensive study of this question. Our main contributions are:
	\begin{enumerate}
		\item We prove that \textbf{every diagonally dominated symmetric biquadratic tensor is SOS} (Theorem \ref{thm:main1}).
		\item We formulate conjectures on the SOS property for symmetric M-biquadratic tensors (Conjecture \ref{conj:M-sos}) and symmetric $\mathrm{B}_{0}$-biquadratic tensors (Conjecture \ref{conj:B0-sos}), discussing the likelihood of their resolution.
		\item We focus on the special case of \textbf{symmetric biquadratic forms}. Notably, Choi's counterexample \cite{Ch75} is \textit{not} symmetric. We establish necessary and sufficient conditions for positive semi-definiteness of monic symmetric biquadratic forms (Theorem \ref{thm:monic-psd}), characterize the geometry of the corresponding PSD cone as a convex polyhedron (Theorem \ref{thm:psd-polyhedron}), and \textbf{prove that every symmetric PSD biquadratic form is SOS for any dimensions $m$ and $n$} (Theorem \ref{thm:monic-sym-sos}). We conjecture that $3 \times 3$ symmetric forms can be expressed as the sum of at most five squares of bilinear forms (Conjecture \ref{conj:sos-rank-3x3-sym}).
	\end{enumerate}
	
	The rest of this paper is organized as follows. Section 2 is devoted to diagonally dominated tensors, where we prove Theorem \ref{thm:main1}. Section 3 presents conjectures on the SOS property for symmetric M-biquadratic tensors and symmetric $\mathrm{B}_{0}$-biquadratic tensors. Section 4 formally defines $x$-symmetric and $y$-symmetric biquadratic forms and studies monic symmetric biquadratic forms, including the general SOS result. Section 5 offers further remarks and discusses directions for future research.
	
	We denote $\{1, \ldots, m\}$ as $[m]$.
	
	\section{Diagonally Dominated Symmetric Biquadratic Tensors}
	
	Suppose that $\A = (a_{ijkl})$, where $a_{ijkl} \in \R$ for $i, k \in [m], j, l \in [n]$. Then $\A$ is called an $m \times n$ {\bf biquadratic tensor}. If
	\begin{equation} \label{e1}
		a_{ijkl} = a_{kjil} = a_{klij}
	\end{equation}
	for $i, k \in [m], j, l \in [n]$, then $\A$ is called a {\bf symmetric biquadratic tensor}.
	Denote the set of all $m \times n$ biquadratic tensors by $BQ(m, n)$, and the set of all $m \times n$ symmetric biquadratic tensors by $SBQ(m, n)$.
	
	If the entries of $\A$ satisfy
	\begin{equation}\label{dom1}
		a_{ijij}\ge r_{ij}\equiv  {1 \over 2}\sum_{i_2=1}^m \left(\sum_{j_2=1}^n \left|\bar a_{iji_2j_2}\right| + \sum_{j_1=1}^n \left|\bar a_{ij_1i_2j}\right|\right),
	\end{equation}
	for all $i \in [m]$ and $j \in [n]$,
	then $\A$ is called a {\bf diagonally dominated biquadratic tensor}.
	It was shown in \cite{QC25} that a diagonally dominated symmetric biquadratic tensor is a PSD tensor.
	
	\begin{Thm}\label{thm:main1}
		A diagonally dominated symmetric biquadratic tensor is an SOS tensor.
	\end{Thm}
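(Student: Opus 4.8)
\medskip

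The plan is to exhibit an explicit sum-of-squares decomposition of the biquadratic form $P(\mathbf{x},\mathbf{y}) = \sum_{i,k=1}^{m}\sum_{j,l=1}^{n} a_{ijkl}x_ix_ky_jy_l$ into squares of bilinear forms. The starting observation is that, setting $w_{ij} := x_iy_j$ and letting $\mathbf{w} = (w_{ij})_{i\in[m],\,j\in[n]}$, one has $P(\mathbf{x},\mathbf{y}) = \sum_{i,k,j,l} a_{ijkl}w_{ij}w_{kl} = \mathbf{w}^\top M\mathbf{w}$, where $M$ is the $mn\times mn$ matrix with entries $M_{(ij),(kl)} = a_{ijkl}$. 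The relation $a_{ijkl}=a_{klij}$ makes $M$ symmetric, and once $M\succeq 0$ is known we may write $M = \sum_s \mathbf{v}_s\mathbf{v}_s^\top$ and obtain $P(\mathbf{x},\mathbf{y}) = \sum_s\bigl(\sum_{i,j}(\mathbf{v}_s)_{(ij)}\,x_iy_j\bigr)^2$, a sum of squares of bilinear forms. So the task reduces to proving that $M$ is positive semidefinite.

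I would then show that the diagonal dominance hypothesis is exactly weak row diagonal dominance of $M$. Since $r_{ij}$ is a sum of absolute values, $a_{ijij}\ge r_{ij}\ge 0$, so $M$ has a nonnegative diagonal; it remains to identify $r_{ij}$ with $\sum_{(k,l)\ne(i,j)}|a_{ijkl}|$, the off-diagonal absolute row sum of row $(i,j)$. This is where the symmetry relations $a_{ijkl}=a_{kjil}=a_{klij}$ enter: applying them to $\bar a_{ij_1i_2j}$ gives $\bar a_{ij_1i_2j}=\bar a_{i_2j_1ij}=\bar a_{iji_2j_1}$, so the two inner sums in the definition of $r_{ij}$ coincide and
\[ r_{ij} = \sum_{i_2=1}^{m}\sum_{j_2=1}^{n}|\bar a_{iji_2j_2}| = \sum_{(k,l)\ne(i,j)}|a_{ijkl}|. \]
A symmetric matrix that is weakly diagonally dominant with nonnegative diagonal is positive semidefinite (for instance by Gershgorin's theorem), hence $M\succeq 0$ and the theorem follows.

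It is also instructive to write the decomposition without matrix language. Group the off-diagonal monomials of $P$ into unordered pairs $\{(i,j),(k,l)\}$, use $a_{ijkl}=a_{klij}$ to express the combined contribution of such a pair as $2a_{ijkl}(x_iy_j)(x_ky_l)$, and complete the square via $2a\,uv = |a|\,(\operatorname{sgn}(a)\,u+v)^2 - |a|\,u^2 - |a|\,v^2$. Summing over all pairs, the subtracted terms $-|a_{ijkl}|(x_iy_j)^2$ collect, for each fixed $(i,j)$, into $-r_{ij}(x_iy_j)^2$, which combines with the diagonal term $a_{ijij}(x_iy_j)^2$ to leave the nonnegative coefficient $a_{ijij}-r_{ij}$. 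One obtains the closed form
\[ P(\mathbf{x},\mathbf{y}) = \sum_{i\in[m],\,j\in[n]}(a_{ijij}-r_{ij})(x_iy_j)^2 + \sum_{\substack{\{(i,j),(k,l)\}\\ (i,j)\ne(k,l)}} |a_{ijkl}|\,\bigl(\operatorname{sgn}(a_{ijkl})\,x_iy_j + x_ky_l\bigr)^2, \]
which is manifestly a sum of squares of bilinear forms.

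The only step that is not mere bookkeeping is the identification of $r_{ij}$ with the off-diagonal absolute row sum of $M$: since $r_{ij}$ as written averages two superficially different sums, one must verify their equality, and this is precisely where the full strength of the symmetry relations (in particular their consequence $a_{ijkl}=a_{ilkj}$) is used. After that identity is in hand the argument is mechanical and completely uniform in $m$ and $n$; in effect the theorem says that, once the symmetry is unwound, ``diagonally dominated'' coincides with positive semidefiniteness of the natural $mn\times mn$ flattening, which is automatically SOS.
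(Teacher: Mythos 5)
Your proposal is correct and takes essentially the same route as the paper: flatten $\A$ to the $mn\times mn$ Gram matrix $M$, use the symmetries $a_{ijkl}=a_{kjil}=a_{klij}$ to identify the hypothesis with weak diagonal dominance of $M$, and conclude $M\succeq 0$ hence SOS. Your explicit completion-of-squares formula is the concrete form of the rank-one decomposition of a diagonally dominant PSD matrix that the paper invokes, and your derivation of the identity $r_{ij}=\sum_{(k,l)\ne(i,j)}|a_{ijkl}|$ via $a_{ij_1 i_2 j}=a_{ij i_2 j_1}$ is in fact stated more carefully than the paper's, which attributes that step only to symmetry of $M$.
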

	
	\begin{proof}
		Let $\A = (a_{ijkl})$ be a diagonally dominated symmetric biquadratic tensor. Consider the $mn \times mn$ matrix $M$ defined by
		\[
		M_{(i,j),(k,l)} = a_{ijkl}, \quad \text{for } i,k \in [m],\; j,l \in [n],
		\]
		where rows and columns are indexed by pairs $(i,j) \in [m] \times [n]$.
		
		The symmetry conditions $a_{ijkl} = a_{kjil} = a_{klij}$ ensure that $M$ is symmetric. Indeed:
		\begin{itemize}
			\item $M_{(i,j),(k,l)} = a_{ijkl} = a_{klij} = M_{(k,l),(i,j)}$ (symmetric)
			\item $M_{(i,j),(k,l)} = a_{ijkl} = a_{kjil}$ (additional internal symmetry)
		\end{itemize}
		
		Now, define the vector $\vz \in \R^{mn}$ by $z_{(i,j)} = x_i y_j$ for $\vx \in \R^m$, $\vy \in \R^n$. Then the biquadratic form becomes:
		\[
		B(\vx,\vy) = \sum_{i,k=1}^m \sum_{j,l=1}^n a_{ijkl} x_i x_k y_j y_l = \vz^\top M \vz.
		\]
		
		We now analyze the diagonal dominance condition. For fixed $(i,j)$, the quantity $r_{ij}$ in \eqref{dom1} satisfies:
		\begin{align*}
			r_{ij} &= \frac{1}{2} \sum_{i_2=1}^m \left( \sum_{j_2=1}^n |\bar{a}_{iji_2j_2}| + \sum_{j_1=1}^n |\bar{a}_{ij_1i_2j}| \right) \\
			&= \frac{1}{2} \left[ \sum_{(i_2,j_2) \neq (i,j)} |M_{(i,j),(i_2,j_2)}| + \sum_{(i_2,j_1) \neq (i,j)} |M_{(i,j_1),(i_2,j)}| \right].
		\end{align*}
		
		Since $M$ is symmetric and $M_{(i,j_1),(i_2,j)} = M_{(i_2,j),(i,j_1)}$, the second sum equals the first sum. Therefore:
		\[
		r_{ij} = \sum_{(k,l) \neq (i,j)} |M_{(i,j),(k,l)}|.
		\]
		
		Thus, the diagonal dominance condition \eqref{dom1} becomes:
		\[
		M_{(i,j),(i,j)} \geq \sum_{(k,l) \neq (i,j)} |M_{(i,j),(k,l)}| \quad \text{for all } (i,j) \in [m] \times [n],
		\]
		which means $M$ is a symmetric diagonally dominant matrix with nonnegative diagonal entries.
		
		A classical result states that any symmetric diagonally dominant matrix with nonnegative diagonal entries is positive semidefinite. Hence $M \succeq 0$, and thus $B(\vx,\vy) = \vz^\top M \vz \geq 0$ for all $\vx,\vy$, confirming that $\A$ is PSD.
		
		Moreover, such matrices admit an SOS decomposition. Specifically, any symmetric diagonally dominant matrix $M$ with nonnegative diagonals can be written as:
		\[
		M = \sum_{p=1}^{mn} \alpha_p \ve_p \ve_p^\top + \sum_{p<q} \beta_{pq} (\ve_p + s_{pq} \ve_q)(\ve_p + s_{pq} \ve_q)^\top,
		\]
		where $\alpha_p \geq 0$, $\beta_{pq} \geq 0$, and $s_{pq} \in \{-1, +1\}$.
		
		Now, for each term in this decomposition:
		\begin{itemize}
			\item If $p = (i,j)$, then $\ve_p \ve_p^\top$ corresponds to $(x_i y_j)^2$
			\item If $p = (i,j)$ and $q = (k,l)$, then $(\ve_p + s_{pq} \ve_q)(\ve_p + s_{pq} \ve_q)^\top$ corresponds to $(x_i y_j + s_{pq} x_k y_l)^2$
		\end{itemize}
		
		All these are squares of bilinear forms. Therefore:
		\[
		B(\vx,\vy) = \vz^\top M \vz = \sum_r (\vw_r^\top \vz)^2,
		\]
		where each $\vw_r^\top \vz$ is a bilinear form in $\vx$ and $\vy$. This shows that $\A$ is an SOS tensor.
	\end{proof}
	
	\section{Conjectures for Structured Biquadratic Tensors}
	\label{sec:conjectures}
	
	This section presents two conjectures regarding the SOS property for two important classes of structured biquadratic tensors: symmetric M-biquadratic tensors and symmetric $\mathrm{B}_{0}$-biquadratic tensors.
	
	\subsection{Symmetric M-Biquadratic Tensors}
	\label{subsec:M-biquadratic}
	
	Let $\A = (a_{ijkl}) \in SBQ(m , n)$, $\vx \in \R^m$ and $\vy \in \R^n$. Then $\A\cdot \vy\vx\vy) \in \R^m$ and $(\A\cdot \vy\vx\vy))_i = \sum_{k=1}^m \sum_{j, l=1}^n a_{ijkl}y_jx_ky_l$ for $i \in [m]$, $(\A\vx \cdot\vx\vy))_j = \sum_{i, k=1}^m \sum_{l=1}^n a_{ijkl}x_ix_ky_l$ for $j \in [n]$. If there are $\lambda \in \R$, $\vx \in \R^m$, $\|\vx\|_2 = 1$, $\vy \in \R^n$, $\|\vy\|_2 = 1$, such that
	\[
	\A\cdot\vy\vx\vy = \lambda\vx, \quad \A\vx\cdot \vx\vy = \lambda\vy,
	\]
	then $\lambda$ is called an M-eigenvalue of $\A$, with $\vx$ and $\vy$ as its M-eigenvectors. M-eigenvalues of symmetric biquadratic tensors were introduced by Qi, Dai and Han \cite{QDH09} in 2009. It was proved there that a symmetric biquadratic tensor always has M-eigenvalues, and it is PSD if and only if all of its M-eigenvalues are nonnegative.
	
	Let $\I = (I_{ijkl})\in BQ(m,n)$, where
	\[
	I_{ijkl}=\left\{\begin{array}{cl}
		1,& \text{ if } i=k \text{ and }j=l,\\
		0, & \text{otherwise.}
	\end{array}\right.
	\]
	$\I$ is referred to as $M$-identity tensor in \cite{DLQY20, WSL20}.
	
	Let $\A = (a_{ijkl}) \in BQ(m , n)$. We call $a_{ijij}$ diagonal entries of $\A$ for $i \in [m]$ and $j \in [n]$. The other entries of $\A$ are called off-diagonal entries of $\A$.
	
	A biquadratic tensor $\A$ in $BQ(m,n)$ is called a {\bf Z-biquadratic tensor} if all of its off-diagonal entries are nonpositive.
	If $\A$ is a Z-biquadratic tensor, then it can be written as $\A=\alpha \I-\B$, where $\I$ is the M-identity tensor in $BQ(m,n)$, and $\B$ is a nonnegative biquadratic tensor.
	By \cite{QC25}, $\B$ has an M-eigenvalue.
	Denote ${\lambda_{\max}}(\B)$ as the largest M eigenvalue of $\B$.
	If $\alpha \ge {\lambda_{\max}}(\B)$, then $\A$ is called an {\bf M-biquadratic tensor}. It was proved in \cite{QC25} that a symmetric biquadratic M-tensor is PSD.
	
	\begin{Conj} \label{conj:M-sos}
		A symmetric biquadratic M-tensor is an SOS tensor.
	\end{Conj}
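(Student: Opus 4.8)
\medskip
\noindent\textbf{A plan of attack.}
I would start from the matrix picture used in the proof of Theorem~\ref{thm:main1}. Write $\A=\alpha\I-\B$ with $\B$ a nonnegative symmetric biquadratic tensor and $\alpha\ge\lambda_{\max}(\B)$, and let $M,N$ be the $mn\times mn$ matrices with $M_{(i,j),(k,l)}=a_{ijkl}$, $N_{(i,j),(k,l)}=b_{ijkl}$, so that $M=\alpha I_{mn}-N$, the matrix $N$ is symmetric and entrywise nonnegative (with the extra symmetry $N_{(i,j),(k,l)}=N_{(k,j),(i,l)}$), and $P(\vx,\vy)=\vz^\top M\vz$ for $z_{(i,j)}=x_iy_j$. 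If one can exhibit positive vectors $\vu\in\R^m$, $\vv\in\R^n$ such that the rank-one vector $\vw$ with $w_{(k,l)}=u_kv_l$ satisfies $N\vw\le\alpha\vw$ componentwise, then the tensor $\A'$ with entries $a'_{ijkl}=u_iv_j\,a_{ijkl}\,u_kv_l$ is diagonally dominated, hence SOS by Theorem~\ref{thm:main1}; reversing the bilinear substitution $x_i\mapsto u_ix_i$, $y_j\mapsto v_jy_j$, which preserves the SOS property, then produces an SOS decomposition of $\A$. The boundary case $\alpha=\lambda_{\max}(\B)$ would follow from the strict case $\alpha>\lambda_{\max}(\B)$ by perturbation, since the cone of SOS biquadratic forms of fixed size is closed.

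\medskip
The obstacle is that the existence of such a rank-one positive ``sub-Perron vector'' of $N$ is \emph{not} implied by $\alpha\ge\lambda_{\max}(\B)$. By the Collatz--Wielandt inequality, $N\vw\le\alpha\vw$ admits a positive solution $\vw$ (rank-one or not) only when $\alpha$ is at least the largest eigenvalue $\rho(N)$ of $N$ viewed as an $mn\times mn$ matrix, and
\[
\rho(N)=\max_{\|\vz\|_2=1}\vz^\top N\vz\;\ge\;\max_{\|\vx\|_2=\|\vy\|_2=1}(\vx\otimes\vy)^\top N(\vx\otimes\vy)=\lambda_{\max}(\B),
\]
with the inequality generally strict, because the Perron eigenvector of $N$ need not be a rank-one $m\times n$ array. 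Equivalently, $M$ itself need not be positive semidefinite, so the short argument of Theorem~\ref{thm:main1} does not carry over; this is precisely the phenomenon that separates M-biquadratic tensors from diagonally dominated ones. Informally, the M-eigenvalue condition is a constraint obtained after contracting over the $\vy$-index, whereas diagonal dominance is a per-entry constraint, and bridging this gap is the crux of the conjecture.

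\medskip
I expect the way forward is to split $\B=\B_1+\B_2$, absorbing $\B_1$ into an explicit SOS certificate and reducing $\alpha\I-\B_2$ to a diagonally dominated tensor after a positive scaling. Since $Q_{\B}(\vx,\vy):=\sum a_{ijkl}x_ix_ky_jy_l$ has nonnegative coefficients, one can peel off squares $\beta(x_iy_j\pm x_ky_l)^2$, and --- crucially, to cover the regime $\lambda_{\max}(\B)<\rho(N)$ --- genuine squares of bilinear forms supported on three or more monomials $x_iy_j$ (as in Choi-type identities), choosing the amounts removed so that the residual tensor's Perron value drops to at most $\alpha$. Making this quantitative while respecting the double symmetry $a_{ijkl}=a_{kjil}=a_{klij}$, which forces many coincidences among the entries of $N$, is where I anticipate the main technical difficulty; a natural starting point is the first genuinely open case $m=n=3$, since positive semidefinite biquadratic forms with $\min\{m,n\}\le 2$ are known to be SOS, while Choi's $3\times3$ counterexample \cite{Ch75} (which is neither symmetric nor M) shows the obstruction is already active there. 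An alternative formulation worth pursuing writes $P(\vx,\vy)=\vx^\top\!\big(\alpha\|\vy\|_2^2\,I_m-B(\vy)\big)\vx$ with $B(\vy)=\sum_{j,l}y_jy_l\,B^{jl}$ and each $B^{jl}=(b_{ijkl})_{i,k}$ a nonnegative symmetric matrix satisfying $B^{jl}=B^{lj}$, recasting the conjecture as the assertion that this matrix-valued quadratic form in $\vy$ admits an SOS-matrix decomposition.
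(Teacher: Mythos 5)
The statement you were given is a conjecture, not a theorem: the paper offers no proof and only remarks that resolving it ``would require developing new techniques to bridge the gap between M-eigenvalues and matrix eigenvalues.'' There is therefore no proof in the paper to compare against, and your proposal --- which is explicitly a plan of attack rather than a claimed argument --- is the right posture to take.

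Your analysis of why the direct route fails is accurate and is exactly the obstruction the authors have in mind. The unfolding $M=\alpha I_{mn}-N$ is symmetric with $N$ nonnegative, and for a rank-one positive scaling $w_{(i,j)}=u_iv_j$ the componentwise inequality $N\vw\le\alpha\vw$ is indeed equivalent to diagonal dominance of the scaled tensor (the two sums in the paper's condition \eqref{dom1} collapse to matrix row sums by the symmetry $a_{ijkl}=a_{kjil}$, just as in the proof of Theorem~\ref{thm:main1}), and the substitution $x_i\mapsto u_ix_i$, $y_j\mapsto v_jy_j$ preserves SOS. But as you say, Collatz--Wielandt shows such a $\vw$ can exist only when $\alpha\ge\rho(N)$, whereas the M-tensor hypothesis gives only $\alpha\ge\lambda_{\max}(\B)=\max_{\|\vx\|=\|\vy\|=1}(\vx\otimes\vy)^\top N(\vx\otimes\vy)\le\rho(N)$, and the inequality is generically strict because the Perron eigenvector of $N$ need not be a Kronecker product. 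So $M$ need not even be PSD, and the argument of Theorem~\ref{thm:main1} does not transfer; this is precisely the ``gap between M-eigenvalues and matrix eigenvalues.'' Your further suggestions --- peeling off explicit squares to lower the Perron value of the residual, exploiting the entry coincidences forced by $a_{ijkl}=a_{kjil}=a_{klij}$, starting with $m=n=3$, or recasting the problem as an SOS-matrix decomposition of $\alpha\|\vy\|_2^2 I_m-B(\vy)$ --- are sensible research directions, but none of them is carried out, and the conjecture remains open. In short: you have not proved the statement, you correctly say so, and your diagnosis of the central difficulty matches the paper's own assessment.
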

	
	This conjecture might be provable with substantial work, but would require developing new techniques to bridge the gap between M-eigenvalues and matrix eigenvalues.
	
	\subsection{Symmetric $\mathrm{B}_{0}$-Biquadratic Tensors}
	\label{subsec:B0-biquadratic}
	
	Let $\A = \left(a_{i_1j_1i_2j_2}\right) \in BQ(m, n)$. Suppose that
	for $i \in [m]$ and $j \in [n]$, we have
	\begin{equation}\label{B1}
		{1 \over 2}\sum_{i_2=1}^m \left(\sum_{j_2=1}^n  a_{iji_2j_2} + \sum_{j_1=1}^n  a_{ij_1i_2j}\right) \ge 0,
	\end{equation}
	and for $i, i_2 \in [m]$ and $j, j_1, j_2 \in [n]$, we have
	\begin{equation}\label{B2}
		{1 \over 2mn}\sum_{i_2=1}^m \left(\sum_{j_2=1}^n  a_{iji_2j_2} + \sum_{j_1=1}^n  a_{ij_1i_2j}\right) \ge \max \left\{ \bar a_{iji_2j_2}, \bar a_{ij_1i_2j} \right\}.
	\end{equation}
	Then we say that $\A$ is a {\bf B$_0${-}biquadratic tensor}. It was proved in \cite{QC25} that a
	B$_0${-}biquadratic tensor is PSD.
	
	\begin{Conj} \label{conj:B0-sos}
		A symmetric B$_0${-}biquadratic tensor is an SOS tensor.
	\end{Conj}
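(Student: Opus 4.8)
The plan is to follow the strategy of Theorem~\ref{thm:main1}: associate to $\A=(a_{ijkl})\in SBQ(m,n)$ the $mn\times mn$ matrix $M$ with $M_{(i,j),(k,l)}=a_{ijkl}$, show that the $\mathrm{B}_0$-conditions \eqref{B1}--\eqref{B2} force $M\succeq 0$ \emph{as a matrix}, and then extract an SOS certificate from a factorization $M=\sum_r\vw_r\vw_r^\top$, noting that with $z_{(i,j)}=x_iy_j$ each $\vw_r^\top\vz$ is a bilinear form in $\vx,\vy$. The one genuinely new ingredient, relative to the PSD proof of \cite{QC25}, is the observation that in the symmetric case $M$ is not merely a symmetric matrix but is in fact a symmetric $\mathrm{B}_0$-matrix, a class whose members are classically known to be positive semidefinite.

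The first step is to translate \eqref{B1}--\eqref{B2} into matrix language. Using the symmetry relations \eqref{e1}, one derives the ``swap-$\vy$-slots'' identity $a_{i_1j_1i_2j_2}=a_{i_1j_2i_2j_1}$; applying it gives $\sum_{i_2,j_1}a_{ij_1i_2j}=\sum_{i_2,j_2}a_{iji_2j_2}=:R_{(i,j)}$, the $(i,j)$-th row sum of $M$, so the symmetrized averages in \eqref{B1}--\eqref{B2} collapse to $R_{(i,j)}$ and $\tfrac{1}{mn}R_{(i,j)}$. The same identity, together with the fact that the bar only zeroes out the diagonal entries $a_{ijij}$, yields $\bar a_{ij_1i_2j}=\bar a_{iji_2j_1}$, so as $i_2,j_1,j_2$ range over all admissible values the right-hand side of \eqref{B2} sweeps exactly the off-diagonal entries of row $(i,j)$ of $M$. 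Hence, for a symmetric biquadratic tensor, \eqref{B1}--\eqref{B2} are equivalent to
\[
R_{(i,j)}\ge 0 \qquad\text{and}\qquad \frac{1}{mn}R_{(i,j)}\ge M_{(i,j),(k,l)}\ \ \text{for all }(k,l)\neq(i,j),
\]
which is precisely the statement that $M$ is a symmetric $\mathrm{B}_0$-matrix of order $mn$.

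The second step invokes the known structural decomposition of symmetric $\mathrm{B}_0$-matrices (the matrix specialization of the Song--Qi decomposition for $\mathrm{B}_0$-tensors): $M=M'+\sum_{l}h_l\,\vu_{J_l}\vu_{J_l}^\top$, where $M'$ is a symmetric diagonally dominant matrix with nonnegative diagonal, $h_l\ge 0$, and $\vu_{J_l}\in\{0,1\}^{mn}$ is the indicator vector of a subset $J_l\subseteq[m]\times[n]$; equivalently one may just cite that a symmetric $\mathrm{B}_0$-matrix is a symmetric $\mathrm{P}_0$-matrix, hence positive semidefinite. Both summand types are PSD matrices, so $M\succeq 0$, and in fact the decomposition is already an explicit SOS certificate for the biquadratic form: $\vz^\top\vu_{J_l}\vu_{J_l}^\top\vz=\big(\sum_{(i,j)\in J_l}x_iy_j\big)^2$ is the square of a bilinear form, while $\vz^\top M'\vz$ is SOS since $M'\succeq 0$ (exactly as in the proof of Theorem~\ref{thm:main1}). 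Summing, $P(\vx,\vy)=\vz^\top M\vz$ is a nonnegative combination of squares of bilinear forms, so $\A$ is SOS.

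The step I expect to require the most care is the bookkeeping in the reduction of the first step: one must check precisely that the two-term symmetrized sums in \eqref{B1}--\eqref{B2} collapse to the honest row sum $R_{(i,j)}$, and that the $\max$ over barred entries in \eqref{B2} really ranges over all off-diagonal entries of the relevant row (including the harmless case in which a barred index lands on a diagonal position), so that one lands on exactly the textbook $\mathrm{B}_0$-matrix inequality with divisor equal to the order $mn$. If this goes through as anticipated, the conjecture becomes a theorem with essentially the same short proof as Theorem~\ref{thm:main1}, the only extra input being the classical positive semidefiniteness of symmetric $\mathrm{B}_0$-matrices; if instead \eqref{B1}--\eqref{B2} for symmetric tensors turn out to be strictly weaker than ``$M$ is a $\mathrm{B}_0$-matrix'', one would need a genuinely new argument linking M-eigenvalue nonnegativity to matrix positive semidefiniteness, paralleling the situation for Conjecture~\ref{conj:M-sos}. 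A minor point worth recording is that the factor $M'$ need not itself be the matrix of a \emph{symmetric} biquadratic tensor (the internal symmetry $a_{ijkl}=a_{kjil}$ may be lost in the decomposition), but this is immaterial for the SOS conclusion, since only the polynomial identity $\vz^\top M\vz=\vz^\top M'\vz+\sum_l h_l(\vu_{J_l}^\top\vz)^2$ is used, and it holds because the decomposition is of the actual matrix $M$.
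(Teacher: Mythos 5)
The paper does not prove this statement --- it is stated as a conjecture, and the authors explicitly express pessimism: ``This conjecture is very unlikely to be provable with current knowledge; the conditions seem too weak to guarantee SOS decomposition.'' So there is no paper proof to compare against; the question is simply whether your argument is sound. Having checked it carefully, I believe it is, and that you have in fact resolved the conjecture affirmatively.

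The decisive step is your first one, and I verified it in detail. From \eqref{e1} one gets $a_{i_1j_1i_2j_2}=a_{i_2j_2i_1j_1}=a_{i_1j_2i_2j_1}$, so the ``swap-$y$'' identity holds, and therefore $\sum_{i_2,j_1}a_{ij_1i_2j}=\sum_{i_2,j_1}a_{iji_2j_1}=R_{(i,j)}$; likewise $\bar a_{ij_1i_2j}=\bar a_{iji_2j_1}$, so the two barred families sweep the same set of off-diagonal entries of row $(i,j)$ of $M$. Hence, \emph{for a symmetric tensor}, \eqref{B1}--\eqref{B2} are exactly the statement that the $mn\times mn$ unfolding $M$ satisfies $R_{(i,j)}\ge 0$ and $\tfrac{1}{mn}R_{(i,j)}\ge M_{(i,j),(k,l)}$ for $(k,l)\neq(i,j)$, i.e.\ $M$ is a symmetric $\mathrm{B}_0$-matrix of order $mn$. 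The clean way to finish is the one you mention parenthetically: $M+\epsilon I$ is a B-matrix for every $\epsilon>0$, hence a P-matrix (Pe\~na), so letting $\epsilon\to 0$ all principal minors of $M$ are $\ge 0$; a symmetric $\mathrm{P}_0$-matrix has no negative eigenvalues (from the sign pattern of its characteristic polynomial), so $M\succeq 0$, and any factorization $M=\sum_r\vw_r\vw_r^\top$ gives $P(\vx,\vy)=\vz^\top M\vz=\sum_r(\vw_r^\top\vz)^2$ with each $\vw_r^\top\vz$ bilinear. That is exactly the mechanism of the paper's own Theorem~\ref{thm:main1}, with ``diagonally dominant'' replaced by ``$\mathrm{B}_0$''.

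Two small remarks. First, the Song--Qi-style decomposition $M=M'+\sum_l h_l\vu_{J_l}\vu_{J_l}^\top$ is an unnecessary detour and is not quite the standard matrix statement; the $\mathrm{B}_0\Rightarrow\mathrm{P}_0\Rightarrow$ PSD route is both shorter and more clearly citable, so I would drop the decomposition claim entirely. Second, your argument actually proves something strictly stronger than the conjecture asks for: it shows $M\succeq 0$ as a matrix, not merely that the biquadratic form is nonnegative on rank-one vectorizations $\vz=\vx\otimes\vy$. This is presumably why the paper's authors did not see it --- the PSD proof of $\mathrm{B}_0$-biquadratic tensors in \cite{QC25} likely went through M-eigenvalues rather than through the unfolding $M$, and the stronger matrix-level fact was not noticed. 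Given that the paper flags this conjecture as ``very unlikely,'' it would be worth triple-checking the reduction of \eqref{B1}--\eqref{B2} against the precise conventions in \cite{QC25} (in particular the meaning of the bar and the $\tfrac{1}{2mn}$ normalization), but as the paper defines things, your argument goes through and upgrades the conjecture to a theorem.
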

	
	This conjecture is very unlikely to be provable with current knowledge; the conditions seem too weak to guarantee SOS decomposition.
	
	For more discussion on biquadratic tensors, see \cite{DLQY20, WSL20, Zh23}.
	
	\section{Symmetric Biquadratic Forms}
	
	Suppose that $\A = (a_{ijkl})$, where $a_{ijkl} \in \R$ for $i, k \in [m], j, l \in [n]$, is an $m \times n$ biquadratic tensor. Let
	\begin{equation} \label{bqform}
		P(\vx, \vy) = \sum_{i,k=1}^m \sum_{j,l=1}^n a_{ijkl}x_i y_j x_k y_l,
	\end{equation}
	where $\vx \in \R^m$ and $\vy \in \R^n$. Then $P$ is called a \textbf{biquadratic form}. If $P(\vx, \vy) \ge 0$ for all $\vx \in \R^m$ and $\vy \in \R^n$, then $P$ is called \textbf{positive semi-definite (PSD)}. If
	\[
	P(\vx, \vy) = \sum_{p=1}^r f_p(\vx, \vy)^2,
	\]
	where $f_p$ for $p \in [r]$ are bilinear forms, then we say that $P$ is \textbf{sum-of-squares (SOS)}. The smallest $r$ is called the \textbf{SOS rank} of $P$. Clearly, $P$ is PSD or SOS if and only if $\A$ is PSD or SOS, respectively.
	
	While a biquadratic form $P$ may be constructed from a biquadratic tensor $\A$, the tensor $\A$ is not unique to $P$. However, there is a unique \emph{symmetric} biquadratic tensor $\A$ associated with $P$.
	
	\subsection{Historical Context and Symmetric Forms}
	In 1973, Calder\'{o}n \cite{Ca73} proved that an $m \times 2$ PSD biquadratic form can always be expressed as the sum of $\frac{3m(m+1)}{2}$ squares of bilinear forms. In 1975, Choi \cite{Ch75} gave a concrete example of a $3 \times 3$ PSD biquadratic form which is not SOS. Recently, in \cite{CQX25} it was shown that a $2 \times 2$ PSD biquadratic form can always be expressed as the sum of three squares, and in \cite{QCX25} that a $3 \times 2$ PSD biquadratic form can be expressed as the sum of four squares.
	
	Motivated by the work of Goel, Kuhlmann, and Reznick \cite{GKR17} on even symmetric forms, we consider symmetric biquadratic forms.
	
	\begin{Def}
		Suppose $P(\vx, \vy) = P(x_1, \ldots, x_m, y_1, \ldots, y_n)$. If
		\[
		P(x_1, \ldots, x_m, y_1, \ldots, y_n) = P(x_{\sigma(1)}, \ldots, x_{\sigma(m)}, y_1, \ldots, y_n)
		\]
		for any permutation $\sigma$, then $P$ is called \textbf{$x$-symmetric}. Similarly, we define \textbf{$y$-symmetric biquadratic forms}. If $P$ is both $x$-symmetric and $y$-symmetric, we call $P$ a \textbf{symmetric biquadratic form}.
	\end{Def}
	
	Note that a symmetric biquadratic form $P$ has only four free coefficients. If we fix the diagonal coefficients as $1$, we may write such a form as
	\begin{align}
		\nonumber P(\vx, \vy) = &\sum_{i=1}^m \sum_{j=1}^n x_i^2 y_j^2
		+ a \sum_{i \neq k} \sum_{j=1}^n x_i x_k y_j^2 \\
		&+ b \sum_{i=1}^m \sum_{j \neq l} x_i^2 y_j y_l
		+ c \sum_{i \neq k} \sum_{j \neq l} x_i y_j x_k y_l. \label{equ:monic_sbq}
	\end{align}
	We call such a form a \textbf{monic symmetric biquadratic form}.
	
	Note that if the diagonal coefficients are negative, then $P$ is not PSD. If the diagonal coefficients are zero, then $P$ is not PSD unless $a=b=c=0$. These exceptional cases are excluded from our analysis, as this paper focuses on PSD and SOS forms.
	
	We call the unique symmetric biquadratic tensor corresponding to an $x$-symmetric, $y$-symmetric, symmetric, or monic symmetric biquadratic form a \textbf{completely $x$-symmetric}, \textbf{completely $y$-symmetric}, \textbf{symmetric}, or \textbf{monic completely symmetric biquadratic tensor}, respectively. Thus, we have several new subclasses of symmetric biquadratic tensors.
	
	\subsection{Positive Semi-definiteness of Monic Symmetric Forms}
	We now address the following question: Under which conditions on $a$, $b$, and $c$ is a monic symmetric biquadratic form $P$ PSD?
	
	\begin{Thm}\label{thm:monic-psd}
		Let $P$ defined by \eqref{equ:monic_sbq} be an $m \times n$ monic symmetric biquadratic form. Then $P$ is PSD if and only if the parameters $a$, $b$, and $c$ satisfy:
		\begin{enumerate}
			\item[(i)] $1  - b + (m-1)(a-c) \geq 0$,
			\item[(ii)] $1  - a + (n-1)(b-c) \geq 0$,
			\item[(iii)] $1 - a - b + c \geq 0$,
			\item[(iv)] $1 +(m-1)a + (n-1)b + (m-1)(n-1)c \geq 0$,
			\item[(v)] $1-\frac{ab}{c}\ge 0$ when $c\neq 0$, $\frac{a}{c}\in[1-n,1)$, and $\frac{b}{c}\in[1-m,1)$.
		\end{enumerate}
	\end{Thm}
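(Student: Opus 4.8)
The plan is to strip $P$ down to a function of four scalars and then reduce positivity to a handful of elementary $2\times 2$ positivity checks. First, put $u=\|\vx\|_2^2$, $v=\|\vy\|_2^2$, $p=\mathbf{1}_m^\top\vx$ and $q=\mathbf{1}_n^\top\vy$. Using $\sum_{i\ne k}x_ix_k=p^2-u$ and $\sum_{j\ne l}y_jy_l=q^2-v$, each of the four groups of terms in \eqref{equ:monic_sbq} factors as a product of an $\vx$-expression and a $\vy$-expression, and expanding gives the identity
\[
  P(\vx,\vy)=(1-a-b+c)\,uv+(a-c)\,p^2v+(b-c)\,uq^2+c\,p^2q^2 .
\]
I would double-check this on $m=n=2$ before proceeding, since a sign slip here would corrupt everything downstream.

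Next, freeze $\vy$ and view $P(\cdot,\vy)$ as a quadratic form $\vx^\top Q(\vy)\vx$, where
\[
  Q(\vy)=\alpha(\vy)\,I_m+\beta(\vy)\,\mathbf{1}_m\mathbf{1}_m^\top,\qquad
  \alpha(\vy)=(1-a-b+c)v+(b-c)q^2,\quad \beta(\vy)=(a-c)v+cq^2 .
\]
Then $P$ is PSD iff $Q(\vy)\succeq 0$ for every $\vy$, and since $\mathbf{1}_m\mathbf{1}_m^\top$ has eigenvalues $m$ and $0$, this means precisely that $\alpha(\vy)\ge 0$ for all $\vy$ and $\alpha(\vy)+m\beta(\vy)\ge 0$ for all $\vy$, where $\alpha(\vy)+m\beta(\vy)=[1-b+(m-1)(a-c)]v+[b+(m-1)c]q^2$. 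For $\vy\in\R^n$ the ratio $q^2/v$ runs over exactly $[0,n]$ (Cauchy--Schwarz: $q^2/v=n$ at $\vy\parallel\mathbf{1}_n$, $q^2/v=0$ at $\vy\perp\mathbf{1}_n$, and everything in between is attained), so a form $Av+Bq^2$ is nonnegative on all of $\R^n$ iff $A\ge 0$ and $A+nB\ge 0$. Applying this to $\alpha$ yields exactly conditions (iii) (from $\vy\perp\mathbf{1}_n$) and (ii) (from $\vy\parallel\mathbf{1}_n$); applying it to $\alpha+m\beta$ yields (i) and (iv). Thus (i)--(iv) together are necessary and sufficient for $P$ to be PSD, which is also consistent with the polyhedral picture of Theorem \ref{thm:psd-polyhedron}.

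It remains to handle (v). Since (i)--(iv) already characterize PSD-ness, the point is to verify that (v) holds automatically on the polyhedron they define: whenever $c\ne 0$, $a/c\in[1-n,1)$, $b/c\in[1-m,1)$ and (i)--(iv) hold, one has $1-ab/c\ge 0$. I would do this by a case split on the sign of $c$ and of the pair $a,b$. For $c>0$ the hypothesis forces $a<c$ and $b<c$, so summing (i) and (ii) gives $c\le 1$; then $ab\le c$ is immediate when $a,b\ge 0$ (as $ab<c^2\le c$) or when $a,b$ have opposite signs (as $ab\le 0\le c$), and for $a,b<0$ one reuses (i) and (ii) to bound $|a|$ and $|b|$ in terms of $c$, reducing $|ab|\le c$ to a short quadratic inequality in $c$. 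For $c<0$ the hypothesis forces $a>c$ and $b>c$, and (iii) then gives $b<1$, after which $1-ab/c\ge 0$ follows in each sign case. The fiddly piece — and the step I expect to be the main obstacle — is the $c>0$, $a,b<0$ branch, where one must track which of the linear constraints is tighter against the hypothesis interval; the remaining ingredients, the change of variables and the eigenvalue reduction, are routine but demand care with signs.
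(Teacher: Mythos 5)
Your proof is correct and takes a genuinely different route from the paper. The paper invokes the M-eigenvalue machinery of Qi--Dai--Han: it writes down the M-eigenvalue equations for the monic symmetric tensor, enumerates M-eigenvectors by a case split on whether $\beta_x=0$ or $\vx\parallel\mathbf{1}_m$ (and similarly for $\vy$), identifies each of the quantities in (i)--(v) as an M-eigenvalue arising from one of these cases, and concludes via ``PSD iff all M-eigenvalues nonnegative.'' You instead collapse $P$ directly to
\[
P=(1-a-b+c)uv+(a-c)p^2v+(b-c)uq^2+c\,p^2q^2,\qquad u=\|\vx\|^2,\ v=\|\vy\|^2,\ p=\mathbf{1}^\top\vx,\ q=\mathbf{1}^\top\vy,
\]
then read $P=\vx^\top Q(\vy)\vx$ with $Q(\vy)=\alpha(\vy)I_m+\beta(\vy)\mathbf{1}_m\mathbf{1}_m^\top$ and use the two eigenvalues $\alpha$ and $\alpha+m\beta$ of this rank-one perturbation of the identity, each a linear function of $t=q^2/v\in[0,n]$, so that nonnegativity reduces to the four endpoint conditions (i)--(iv). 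This is a clean, self-contained argument that needs no M-eigenvalue theory and makes it manifest, \emph{before} anything is said about (v), that (i)--(iv) alone characterize PSD-ness -- something the paper only establishes after the fact in Theorem~\ref{thm:psd-polyhedron} via SOS decompositions of the vertices.

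The one place your writeup is weaker than it should be is the derivation of (v) from (i)--(iv); your sign-by-sign case analysis is admittedly fiddly and you flag one branch as a potential obstacle. But your own reduction hands you a slicker closing: after normalizing $u=v=1$ and setting $s=p^2\in[0,m]$, $t=q^2\in[0,n]$, you have the bilinear function $f(s,t)=(1-a-b+c)+(a-c)s+(b-c)t+cst$, whose minimum and maximum over the box $[0,m]\times[0,n]$ are both attained at corners (affine in $s$ for fixed $t$, then affine in $t$). Its interior critical value is exactly $1-ab/c$ (attained at $s^\ast=1-b/c$, $t^\ast=1-a/c$, which lies in the box precisely under the hypothesis of (v)), and this saddle value is always sandwiched between the corner values -- i.e.\ between the quantities in (i)--(iv). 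So (i)--(iv)$\ \Rightarrow$ all corners $\ge 0\ \Rightarrow 1-ab/c\ge 0$, with no case split at all. With that replacement your argument is complete and, in my view, simpler and more transparent than the paper's M-eigenvalue enumeration.
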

	
	\begin{proof}
		Let $\A\in {SBQ(m,n)}$ be the biquadratic tensor corresponding to $P$. Then $\lambda$ is called an M-eigenvalue of $\A$, with $\vx$ and $\vy$ as its M-eigenvectors if $\|\vx\|_2 = 1$, $\|\vy\|_2 = 1$, and
		\begin{align}
			\beta_x\mathbf{1}_m + (1-a+(b-c)(\alpha_y^2-1)) \vx &=\lambda \vx, \label{Meig_x}\\
			\beta_y\mathbf{1}_n + (1-b+(a-c)(\alpha_x^2-1)) \vy &=\lambda \vy.\label{Meig_y}
		\end{align}
		Here, $\alpha_{x} =\mathbf{1}_m^\top \vx$, $\alpha_{y} =\mathbf{1}_n^\top \vy$, $\beta_x=\alpha_x(a+c(\alpha_y^2-1))$, $\beta_y=\alpha_y(b+c(\alpha_x^2-1))$, $\mathbf{1}_m$ and $\mathbf{1}_n$ denote the all ones vectors of length $m$ and $n$, respectively.
		Therefore, $\vx$ is an M-eigenvector of $\A$ only if $\beta_x=0$ or $\vx=\frac{1}{\sqrt{m}}\mathbf{1}_m$. Similarly, $\vy$ is an M-eigenvector of $\A$ only if $\beta_y=0$ or $\vy=\frac{1}{\sqrt{n}}\mathbf{1}_n$.
		
		In the following, we prove conditions (i)--(v) by enumerating all M-eigenvectors.
		
		For condition (i), take $\vx=\frac{1}{\sqrt{m}}\mathbf{1}_m$ and choose $\vy$ such that $\beta_y=0$. Then it follows from \eqref{Meig_y} that $\lambda=1-b+ (m-1)(a-c)$ is an M-eigenvalue of $\A$.
		Indeed, substituting $\lambda=1-b+ (m-1)(a-c)$ to \eqref{Meig_x} yields $\alpha_y=0$.
		Since $P$ is PSD, it follows that $1-b+ (m-1)(a-c) \ge 0$.
		
		Condition (ii) follows by taking $\vy=\frac{1}{\sqrt{n}}\mathbf{1}_n$ and letting $\vx$ satisfy $\beta_x=0$. This leads to the M-eigenvalue $1 - a+(n-1)(b-c) \ge 0$.
		
		For conditions (iii) and (v), let $\vx$ satisfy $\beta_x=0$ and $\vy$ satisfy $\beta_y=0$, respectively.
		Consider the following subcases:
		\begin{itemize}
			\item[(iii.1)] $\alpha_x=0$ and $\alpha_y=0$. Then we may derive that $\lambda= 1 - a - b + c$.
			\item[(iii.2)] $\alpha_x=0$ and $\alpha_y\neq0$. Then it follows from \eqref{Meig_y} that $\lambda= 1 - a - b + c$.
			\item[(iii.3)] $\alpha_x\neq0$ and $\alpha_y=0$. Then it follows from \eqref{Meig_x} that $\lambda= 1 - a - b + c$.
			\item[(iii.4)] $\alpha_x\neq0$, $\alpha_y\neq0$, and $c=0$. Then it follows from $\beta_x=0$ and $\beta_y=0$ that $a=b=0$. Consequently, there is $\lambda= 1 - a - b + c$.
			\item[(iii.5)] $\alpha_x\neq0$, $\alpha_y\neq0$, and $c\neq0$. It follows from $\beta_x=0$ and $\beta_y=0$ that $\alpha_x^2=\frac{c-b}{c}$ and $\alpha_y^2=\frac{c-a}{c}$. This further implies $\lambda= 1 - \frac{ab}{c} \ge 0$.
			Furthermore, by Cauchy inequality and $\|\vx\|=\|\vy\|=1$, we have $\alpha_x^2 \in (0,m],\alpha_y^2\in(0,n]$. Therefore, $\frac{a}{c}\in[1-n,1)$, and $\frac{b}{c}\in[1-m,1)$.
		\end{itemize}
		
		For condition (iv), let $\vx=\frac{1}{\sqrt{m}}\mathbf{1}_m$ and $\vy=\frac{1}{\sqrt{n}}\mathbf{1}_n$.
		A direct computation gives $\lambda = 1 +(m-1)a + (n-1)b + (m-1)(n-1)c \geq 0$.
		
		Conditions (i)--(v) have enumerated all M-eigenvalues. Based on the equivalence between positive semidefiniteness and the nonnegativity of the smallest M-eigenvalue, the conclusion of this theorem is obtained. This completes the proof.
	\end{proof}
	
	\subsection{The PSD Cone is a Convex Polyhedron}
	Although Theorem \ref{thm:monic-psd} contains a nonlinear condition (v), it turns out that for any $m, n \ge 2$, the set of coefficients $(a,b,c)$ for which $P$ is PSD is actually a convex polyhedron defined only by the four linear inequalities (i)--(iv). The vertices of this polyhedron are easily computed.
	
	\begin{Thm}\label{thm:psd-polyhedron}
		Let $P$ be an $m \times n$ monic symmetric biquadratic form as in \eqref{equ:monic_sbq}. Then the PSD set of parameters $(a,b,c)$ is a convex polyhedron (a tetrahedron) in $\R^3$ with vertices
		\[
		\begin{aligned}
			V_1 &= (1, 1, 1), \\
			V_2 &= \left( -\frac{1}{m-1}, \; -\frac{1}{n-1}, \; \frac{1}{(m-1)(n-1)} \right), \\
			V_3 &= \left( -\frac{1}{m-1}, \; 1, \; -\frac{1}{m-1} \right), \\
			V_4 &= \left( 1, \; -\frac{1}{n-1}, \; -\frac{1}{n-1} \right).
		\end{aligned}
		\]
		Consequently, condition (v) of Theorem \ref{thm:monic-psd} is redundant; the PSD set is exactly the convex hull of these four vertices.
	\end{Thm}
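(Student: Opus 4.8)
The plan is to identify the PSD parameter set $\mathcal{S}$ with the tetrahedron $T:=\operatorname{conv}\{V_1,V_2,V_3,V_4\}$ in three stages: first show that the polyhedron $\PP:=\{(a,b,c)\in\R^3:\text{(i)--(iv) of Theorem \ref{thm:monic-psd} hold}\}$ equals $T$; then show that every point of $T$ already satisfies condition (v); and finally combine these with the inclusion $\mathcal{S}\subseteq\PP$ coming from Theorem \ref{thm:monic-psd}. Throughout, $m,n\ge 2$ is assumed (so the denominators in $V_2,V_3,V_4$ are at least $1$), and I write $\ell_1,\ell_2,\ell_3,\ell_4$ for the left-hand sides of (i)--(iv), regarded as affine functions of $(a,b,c)$.

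For the first stage I would begin by recording the face-incidence pattern. A direct substitution shows that $V_1$ makes $\ell_1=\ell_2=\ell_3=0$ while $\ell_4(V_1)=mn>0$; that $V_2$ makes $\ell_1=\ell_2=\ell_4=0$ while $\ell_3(V_2)=\frac{mn}{(m-1)(n-1)}>0$; that $V_3$ makes $\ell_1=\ell_3=\ell_4=0$ while $\ell_2(V_3)=\frac{mn}{m-1}>0$; and that $V_4$ makes $\ell_2=\ell_3=\ell_4=0$ while $\ell_1(V_4)=\frac{mn}{n-1}>0$. In particular each $V_i$ satisfies all of (i)--(iv), so $T\subseteq\PP$. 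Next I would check affine independence of $V_1,\dots,V_4$: subtracting the first row from the others in the $4\times 4$ matrix with rows $(a_i,b_i,c_i,1)$ and expanding gives determinant $-\,m^2n^2/\big((m-1)^2(n-1)^2\big)\ne 0$, so $T$ is a genuine $3$-simplex and every $(a,b,c)\in\R^3$ has unique barycentric coordinates $\mu_1,\mu_2,\mu_3,\mu_4$ relative to $V_1,\dots,V_4$. The incidence pattern says the hyperplane $\{\ell_1=0\}$ is the affine span of the (affinely independent) triple $\{V_1,V_2,V_3\}$, that is, the facet of $T$ opposite $V_4$; since $\mu_4$ is the unique affine function vanishing on that facet with $\mu_4(V_4)=1$, the affine functions $\ell_1$ and $\mu_4$ are proportional, and evaluating at $V_4$ gives $\ell_1=\frac{mn}{n-1}\mu_4$. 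Likewise $\ell_2=\frac{mn}{m-1}\mu_3$, $\ell_3=\frac{mn}{(m-1)(n-1)}\mu_2$, and $\ell_4=mn\,\mu_1$, all positive multiples. Hence $\{\ell_1,\ell_2,\ell_3,\ell_4\ge 0\}=\{\mu_1,\mu_2,\mu_3,\mu_4\ge 0\}=T$, i.e. $\PP=T$, which is the polyhedral (tetrahedral) description. A byproduct, obtained by summing the four proportionalities with weights $n-1,m-1,(m-1)(n-1),1$, is the identity $(n-1)\ell_1+(m-1)\ell_2+(m-1)(n-1)\ell_3+\ell_4\equiv mn$, which exhibits boundedness of $\PP$ directly.

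For the redundancy of (v) I would use convexity of $\mathcal{S}$: the map $(a,b,c)\mapsto P$ is affine and a nonnegative combination of PSD forms is PSD, so $\mathcal{S}$ is convex. It therefore suffices to verify condition (v) at each vertex $V_i$; then Theorem \ref{thm:monic-psd} gives $V_i\in\mathcal{S}$, whence $T=\operatorname{conv}\{V_i\}\subseteq\mathcal{S}\subseteq\PP=T$, so all three sets coincide and (v) is redundant. At $V_1$ one has $a/c=b/c=1\notin[1-n,1)$; at $V_3$ one has $a/c=1\notin[1-n,1)$; at $V_4$ one has $b/c=1\notin[1-m,1)$ --- so the hypothesis of (v) is vacuous at these three vertices. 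At $V_2$ one computes $a/c=1-n\in[1-n,1)$, $b/c=1-m\in[1-m,1)$, and $ab=c$, so $1-\frac{ab}{c}=0\ge 0$ and (v) holds with equality. This completes the proof.

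I expect no deep step here; the only place needing care is the inclusion $\PP\subseteq T$, which I would get from the barycentric identification above (this is where the affine-independence determinant enters). An alternative, avoiding barycentric coordinates, is to deduce boundedness of $\PP$ from the displayed linear identity, note that $\PP$ is then a bounded polytope cut out by four inequalities, observe that each of the four facets is nonempty because the corresponding $V_i$ lies in its relative interior, and invoke the elementary fact that a bounded full-dimensional polytope in $\R^3$ with exactly four facets is a tetrahedron whose vertices are the four triple-intersections of the facet hyperplanes --- which are precisely $V_1,\dots,V_4$. Either way the remaining work is just the bookkeeping of the case analysis above.
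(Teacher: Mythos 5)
Your proof is correct, and it takes a genuinely different---and in two respects tighter---route than the paper's. The paper establishes that $(i)$--$(iv)$ cut out a tetrahedron by asserting that solving triples of equalities gives the four vertices and that the polyhedron is bounded (with only a sketchy justification), and it deduces the redundancy of condition $(v)$ by forward-referencing the explicit SOS decompositions of Proposition~\ref{prop:sos-vertices}: each vertex is SOS, hence PSD, hence the whole tetrahedron is PSD, hence $(v)$ must hold there. You instead identify $\PP=\{\ell_1,\ell_2,\ell_3,\ell_4\ge 0\}$ with $T=\operatorname{conv}\{V_i\}$ directly, by computing the face-incidence pattern, checking affine independence via the $4\times 4$ determinant, and observing that each $\ell_i$ is a positive multiple of the barycentric coordinate opposite the complementary vertex; the resulting identity $(n-1)\ell_1+(m-1)\ell_2+(m-1)(n-1)\ell_3+\ell_4\equiv mn$ is a clean, explicit proof of boundedness that the paper does not supply. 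For the redundancy of $(v)$, you verify $(v)$ directly at each $V_i$ (vacuous at $V_1,V_3,V_4$ and an equality $ab=c$ at $V_2$) and then use convexity of the PSD parameter set, so your argument is self-contained within this theorem and does not rely on Proposition~\ref{prop:sos-vertices}. What the paper's route buys is a tidy narrative linkage to the SOS decompositions it needs anyway for Theorem~\ref{thm:monic-sym-sos}; what your route buys is rigor where the paper hand-waves (the $\PP=T$ identification) and independence from later material. I checked your vertex evaluations, the proportionality constants $\ell_1=\tfrac{mn}{n-1}\mu_4$, $\ell_2=\tfrac{mn}{m-1}\mu_3$, $\ell_3=\tfrac{mn}{(m-1)(n-1)}\mu_2$, $\ell_4=mn\,\mu_1$, the determinant value, and the vertex-by-vertex analysis of condition $(v)$; all are correct.
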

	
	\begin{proof}
		The inequalities (i)--(iv) of Theorem \ref{thm:monic-psd} are linear in $a,b,c$ and define a polyhedron in $\R^3$. Solving the systems of three equations obtained by setting any three of these inequalities to equality yields precisely the four vertices $V_1, V_2, V_3, V_4$. One checks that each vertex satisfies all four inequalities, hence belongs to the polyhedron. Moreover, the polyhedron is bounded because the inequalities imply bounds on $a,b,c$ independently (for example, from (i) and (ii) together with (iii) one obtains $a,b,c \in [-1,1]$ when $m,n \ge 2$). Thus the polyhedron is a tetrahedron.
		
		It remains to show that condition (v) is automatically satisfied for every point in this tetrahedron. Since the tetrahedron is convex, any point $(a,b,c)$ in it can be written as a convex combination of the vertices. In the next subsection we will exhibit explicit SOS decompositions for each vertex, proving that all vertices correspond to PSD (in fact SOS) forms. Because the set of PSD forms is a convex cone, any convex combination of the vertices also yields a PSD form. Hence, by Theorem \ref{thm:monic-psd}, condition (v) must hold for every such point. Therefore condition (v) is redundant, and the PSD set coincides with the tetrahedron.
	\end{proof}
	
	\subsection{SOS Decompositions of the Vertices}
	The four vertices admit simple explicit SOS representations.
	
	\begin{Prop}\label{prop:sos-vertices}
		For any $m,n \ge 2$, the monic symmetric biquadratic forms corresponding to the vertices $V_1, V_2, V_3, V_4$ are SOS. Explicitly,
		\[
		\begin{aligned}
			P_{V_1}(\vx,\vy) &= \Bigl(\sum_{i=1}^m x_i\Bigr)^2 \Bigl(\sum_{j=1}^n y_j\Bigr)^2, \\[4pt]
			P_{V_2}(\vx,\vy) &= \frac{1}{(m-1)(n-1)} \sum_{1\le i<k\le m} \sum_{1\le j<l\le n} \bigl((x_i-x_k)(y_j-y_l)\bigr)^2, \\[4pt]
			P_{V_3}(\vx,\vy) &= \frac{1}{m-1} \sum_{1\le i<k\le m} \Bigl((x_i-x_k)\sum_{j=1}^n y_j\Bigr)^2, \\[4pt]
			P_{V_4}(\vx,\vy) &= \frac{1}{n-1} \sum_{1\le j<l\le n} \Bigl(\Bigl(\sum_{i=1}^m x_i\Bigr)(y_j-y_l)\Bigr)^2.
		\end{aligned}
		\]
	\end{Prop}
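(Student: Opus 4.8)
The plan is to verify each of the four displayed identities directly, and then observe that every right-hand side is visibly a nonnegative linear combination of squares of bilinear forms, hence SOS. Since a monic symmetric biquadratic form as in \eqref{equ:monic_sbq} is completely determined by its diagonal coefficient ($=1$) together with the triple $(a,b,c)$, it suffices, for each vertex $V_t$, to check that the polynomial on the right-hand side is a symmetric biquadratic form (its invariance under independent permutations of the $x$-indices and of the $y$-indices being evident) with diagonal coefficient $1$ and with $(a,b,c)$ equal to the stated vertex. Concretely, I would expand each right-hand side and read off the coefficients of the four monomial types $x_p^2 y_q^2$, $x_p x_r y_q^2$ ($p\neq r$), $x_p^2 y_q y_s$ ($q\neq s$), and $x_p x_r y_q y_s$ ($p\neq r$, $q\neq s$), comparing them with the coefficients $1$, $2a$, $2b$, $4c$ that these monomials carry in \eqref{equ:monic_sbq} (the factors $2$ and $4$ coming from the ordered-index convention in that display).

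For $V_1$ the check is immediate: $\bigl(\sum_i x_i\bigr)^2\bigl(\sum_j y_j\bigr)^2=\sum_{i,k,j,l}x_ix_ky_jy_l$, so all coefficients equal $1$, giving $(a,b,c)=(1,1,1)$, and the form is literally a single square. For $V_2$ I would expand $\bigl((x_i-x_k)(y_j-y_l)\bigr)^2=(x_i^2-2x_ix_k+x_k^2)(y_j^2-2y_jy_l+y_l^2)$ and use two elementary counting facts: a fixed index $p$ lies in exactly $m-1$ unordered pairs $\{i,k\}\subseteq[m]$ (and $q$ in $n-1$ pairs in $[n]$), while a fixed unordered pair $\{p,r\}$ arises from exactly one term of the outer sum. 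This gives diagonal coefficient $\tfrac{1}{(m-1)(n-1)}(m-1)(n-1)=1$, together with $2a=\tfrac{1}{(m-1)(n-1)}(-2)(n-1)$, $2b=\tfrac{1}{(m-1)(n-1)}(-2)(m-1)$, and $4c=\tfrac{1}{(m-1)(n-1)}\cdot 4$, i.e. exactly $V_2$. The computations for $V_3$ and $V_4$ are analogous (and $V_4$ follows from $V_3$ by exchanging the roles of $\vx$ and $\vy$), additionally using that $\bigl(\sum_j y_j\bigr)^2=\sum_{j,l}y_jy_l$ contributes coefficient $1$ to each $y_q^2$ and coefficient $2$ to each $y_qy_s$ with $q\neq s$.

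Finally, SOS-ness is manifest once the identities are established: each of $(x_i-x_k)(y_j-y_l)$, $(x_i-x_k)\sum_j y_j$, $(\sum_i x_i)(y_j-y_l)$ and $(\sum_i x_i)(\sum_j y_j)$ is a bilinear form in $(\vx,\vy)$, so its square is a square of a bilinear form, and the scalar prefactors $\tfrac{1}{(m-1)(n-1)}$, $\tfrac{1}{m-1}$, $\tfrac{1}{n-1}$ are positive for $m,n\ge 2$; absorbing their square roots into the bilinear forms exhibits each $P_{V_t}$ as an honest sum of squares. In particular $P_{V_1}$ has SOS rank $1$, $P_{V_2}$ is a sum of $\binom{m}{2}\binom{n}{2}$ squares, $P_{V_3}$ a sum of $\binom{m}{2}$ squares, and $P_{V_4}$ a sum of $\binom{n}{2}$ squares. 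There is no genuine obstacle here — the argument is pure bookkeeping — and the only point needing a little care is the combinatorial translation between the ordered-index convention of \eqref{equ:monic_sbq} and the unordered pairs $i<k$, $j<l$ indexing the SOS sums.
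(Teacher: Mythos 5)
Your proposal is correct and takes essentially the same approach as the paper, which simply states that ``direct expansion shows that each expression equals the monic symmetric form with the respective parameters $(a,b,c)$'' and that each right-hand side is visibly a sum of squares of bilinear forms. You supply the bookkeeping (the counts $m-1$, $n-1$, the single choice of pair, and the factors $2$ and $4$ from the ordered-index convention in \eqref{equ:monic_sbq}) that the paper leaves implicit; the verified coefficients match the claimed vertices exactly.
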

	
	\begin{proof}
		Direct expansion shows that each expression equals the monic symmetric form with the respective parameters $(a,b,c)$. Each right‑hand side is a sum of squares of bilinear forms.
	\end{proof}
	
	\subsection{Every PSD Symmetric Biquadratic Form is SOS}
	Combining the convexity of the PSD cone with the SOS property of the vertices yields the main result of this section.
	
	\begin{Thm}\label{thm:monic-sym-sos}
		Every PSD monic symmetric biquadratic form (for any $m,n \ge 2$) is a sum of squares of bilinear forms. Consequently, every PSD symmetric biquadratic form is SOS.
	\end{Thm}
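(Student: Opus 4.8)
The plan is to combine the polyhedral description of the PSD parameter set (Theorem \ref{thm:psd-polyhedron}) with the explicit vertex decompositions (Proposition \ref{prop:sos-vertices}), using the fact that the set of SOS forms is a convex cone. First I would observe that if $P$ in \eqref{equ:monic_sbq} is PSD, then by Theorem \ref{thm:monic-psd} its parameters $(a,b,c)$ satisfy inequalities (i)--(iv), so by Theorem \ref{thm:psd-polyhedron} the point $(a,b,c)$ lies in the tetrahedron $T=\operatorname{conv}\{V_1,V_2,V_3,V_4\}$. Expressing $(a,b,c)=\sum_{t=1}^4\lambda_t V_t$ in barycentric coordinates, with $\lambda_t\ge 0$ and $\sum_{t=1}^4\lambda_t=1$, is then immediate.

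The key — and only mildly delicate — point is an affine-linearity observation: the map sending a parameter triple $(a,b,c)$ to the monic symmetric form \eqref{equ:monic_sbq} is affine, and because the barycentric weights sum to $1$ the coefficient $1$ multiplying the diagonal term $\sum_{i,j}x_i^2y_j^2$ is reproduced exactly. Hence $P=\sum_{t=1}^4\lambda_t P_{V_t}$, where $P_{V_t}$ is the monic symmetric form at vertex $V_t$. By Proposition \ref{prop:sos-vertices} each $P_{V_t}$ is a sum of squares of bilinear forms, and since $\lambda_t\ge 0$ we may absorb $\sqrt{\lambda_t}$ into each such bilinear form; therefore $P$ is itself a sum of squares of bilinear forms, which proves the first assertion.

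For the concluding ``consequently'' clause, let $Q$ be an arbitrary PSD symmetric biquadratic form (not assumed monic), and let $d$ denote its common diagonal coefficient, i.e.\ the coefficient of every $x_i^2y_j^2$. By the remark following \eqref{equ:monic_sbq} we have $d\ge 0$; if $d=0$ then $Q\equiv 0$ (a direct check, analogous to that remark), which is trivially SOS. If $d>0$ then $Q/d$ is a PSD monic symmetric biquadratic form, hence SOS by the first part whenever $m,n\ge 2$, and scaling back shows $Q=d\,(Q/d)$ is SOS. The residual low-dimensional cases $m=1$ or $n=1$ are classical: there $Q$ factors as $x_1^2$ (respectively $y_1^2$) times a PSD symmetric quadratic form, which is a sum of squares of linear forms by the spectral theorem, so $Q$ is again a sum of squares of bilinear forms.

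I do not anticipate a genuine obstacle: all the substantive work is carried by Theorems \ref{thm:monic-psd} and \ref{thm:psd-polyhedron} together with Proposition \ref{prop:sos-vertices}. The only point that warrants care is the translation between convex combinations of \emph{parameters} and convex combinations of \emph{forms} — which relies on the normalization of the diagonal coefficient — together with a clean handling of the degenerate zero form and of the cases $m=1$ or $n=1$ that fall outside the hypothesis $m,n\ge 2$.
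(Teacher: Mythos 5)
Your proposal is correct and follows essentially the same route as the paper: both use Theorem \ref{thm:psd-polyhedron} to place $(a,b,c)$ in the tetrahedron, write it as a convex combination of the vertices, invoke linearity of the parameter-to-form map to transfer that convex combination to the forms, and then conclude from Proposition \ref{prop:sos-vertices} and the convex-cone property of SOS forms. The only substantive addition is your explicit treatment of the degenerate cases $m=1$ or $n=1$ in the ``consequently'' clause (factoring off $x_1^2$ or $y_1^2$ and applying the spectral theorem), which the paper leaves implicit; this is a small but genuine tightening of the argument, since the hypothesis $m,n\ge 2$ appears only in the first sentence of the theorem.
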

	
	\begin{proof}
		By Theorem \ref{thm:psd-polyhedron}, the set of coefficients $(a,b,c)$ giving a PSD monic symmetric form is the tetrahedron with vertices $V_1, V_2, V_3, V_4$. Proposition \ref{prop:sos-vertices} shows that each vertex corresponds to an SOS form. Since the mapping from $(a,b,c)$ to the form $P$ is linear, any convex combination of the vertices yields the same convex combination of the associated forms. The set of SOS forms is a convex cone; therefore any convex combination of SOS forms is again SOS. Hence every point in the tetrahedron corresponds to an SOS form, proving the first statement.
		
		Now let $Q$ be an arbitrary $m \times n$ symmetric biquadratic form that is PSD. By symmetry, all its diagonal coefficients $q_{ijij}$ are equal. If they are all zero, then evaluating $Q$ on $x=e_i, y=e_j$ shows that $Q$ is identically zero, which is trivially SOS. Otherwise, we may scale $Q$ so that its diagonal coefficients become $1$. The scaled form is then a monic symmetric biquadratic form, which by the first part is SOS. Scaling back preserves the SOS property. Thus $Q$ is SOS.
	\end{proof}
	
	\subsection{The $3 \times 3$ Case and a Conjecture on SOS Rank}
	For illustration we restate the $3 \times 3$ case, which was historically the first nontrivial instance where the SOS property was unknown.
	
	\begin{Cor}
		Every $3 \times 3$ PSD symmetric biquadratic form is SOS.
	\end{Cor}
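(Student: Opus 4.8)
The plan is to obtain this corollary as the $m=n=3$ specialization of Theorem~\ref{thm:monic-sym-sos}, which already establishes that \emph{every} PSD symmetric biquadratic form is SOS for all $m,n\ge 2$. Formally nothing remains to be proved: one simply sets $m=n=3$. Below I indicate how the general argument reads in this concrete case, in case a self-contained statement is wanted.

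First, I would specialize Theorem~\ref{thm:psd-polyhedron}: for $m=n=3$ the set of parameters $(a,b,c)$ yielding a PSD monic symmetric form is the tetrahedron with vertices $V_1=(1,1,1)$, $V_2=(-\tfrac12,-\tfrac12,\tfrac14)$, $V_3=(-\tfrac12,1,-\tfrac12)$, and $V_4=(1,-\tfrac12,-\tfrac12)$. Next, I would quote the explicit decompositions of Proposition~\ref{prop:sos-vertices} at these four vertices; for instance $P_{V_2}(\vx,\vy)=\tfrac14\sum_{1\le i<k\le 3}\sum_{1\le j<l\le 3}\bigl((x_i-x_k)(y_j-y_l)\bigr)^2$, a sum of nine squares of bilinear forms, and similarly for $V_1,V_3,V_4$. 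Then, using that the map $(a,b,c)\mapsto P$ is linear and that the SOS forms form a convex cone, any PSD monic symmetric $3\times 3$ form, being a convex combination of $V_1,\dots,V_4$, inherits the corresponding convex combination of these decompositions and is therefore SOS. Finally, a general (not necessarily monic) $3\times 3$ PSD symmetric form has a single common diagonal coefficient: if it is zero the form vanishes identically, and otherwise one normalizes it to the monic case, the scaling preserving the SOS property — exactly the last paragraph of the proof of Theorem~\ref{thm:monic-sym-sos}.

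Since every step is a substitution into results proved earlier in the paper, there is no real obstacle here. The only point worth emphasizing is that the restriction to $3\times 3$ is not vacuous: this is historically the first nontrivial dimension, because Choi's non-SOS biquadratic form \cite{Ch75} is $3\times 3$ (though not symmetric), so the corollary genuinely contrasts with the non-symmetric situation even though its proof is an immediate appeal to Theorem~\ref{thm:monic-sym-sos}.
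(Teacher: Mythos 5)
Your proof is correct and takes exactly the paper's approach: the paper's own proof is the single line ``This is the special case $m=n=3$ of Theorem~\ref{thm:monic-sym-sos}.'' Your elaboration, correctly specializing the vertices to $V_2=(-\tfrac12,-\tfrac12,\tfrac14)$, $V_3=(-\tfrac12,1,-\tfrac12)$, $V_4=(1,-\tfrac12,-\tfrac12)$ and spelling out the convex-combination and rescaling steps, is just an expanded version of the same reduction.
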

	
	\begin{proof}
		This is the special case $m=n=3$ of Theorem \ref{thm:monic-sym-sos}.
	\end{proof}
	
	The explicit SOS decompositions given in Proposition \ref{prop:sos-vertices} for the vertices, together with the known SOS rank of the interior point $P_0(\vx,\vy)=(x_1^2+x_2^2+x_3^2)(y_1^2+y_2^2+y_3^2)$ (which is four), suggest that the SOS rank of any $3 \times 3$ symmetric PSD form might be bounded by five.
	
	\begin{Conj}\label{conj:sos-rank-3x3-sym}
		A $3 \times 3$ symmetric PSD biquadratic form can always be expressed as the sum of at most five squares of bilinear forms.
	\end{Conj}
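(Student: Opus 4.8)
The plan is to combine the polyhedral description of the PSD cone (Theorem~\ref{thm:psd-polyhedron}) with classical composition‑of‑quadratic‑forms identities, and I expect the real difficulty to be that the bound five is in fact too small, so that the conjecture should read ``six''. Since SOS rank is unchanged by a positive rescaling and by a simultaneous orthogonal change of the $\vx$‑ and $\vy$‑variables, I would first reduce to a monic symmetric $3\times 3$ form $P$ with $(a,b,c)$ in the tetrahedron of Theorem~\ref{thm:psd-polyhedron}. Writing $s_x = \mathbf 1^\top\vx$ and $s_y=\mathbf 1^\top\vy$, expressing $P$ through the invariants $\|\vx\|_2^2,s_x^2,\|\vy\|_2^2,s_y^2$ gives the ``aggregate'' identity
\[
P(\vx,\vy) = \kappa_3\,\lambda_1(\vx)\lambda_2(\vy) + \kappa_2\,\lambda_1(\vx)\mu_2(\vy) + \kappa_1\,\mu_1(\vx)\lambda_2(\vy) + \kappa_4\,\mu_1(\vx)\mu_2(\vy),
\]
where $\lambda_1(\vx)=\|\vx\|_2^2-\tfrac13 s_x^2\ge 0$, $\mu_1(\vx)=\tfrac13 s_x^2$ (and $\lambda_2,\mu_2$ analogously in $\vy$), and $\kappa_1,\dots,\kappa_4$ are exactly the left‑hand sides of conditions (i)--(iv) of Theorem~\ref{thm:monic-psd} for $m=n=3$, hence all $\ge 0$. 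Passing to orthonormal coordinates $\vx'$ whose third axis is $\mathbf 1/\sqrt 3$ (likewise $\vy'$) turns $\mu_1$ into $(x_3')^2$ and $\lambda_1$ into $(x_1')^2+(x_2')^2$, so $P$ becomes the \emph{diagonal} biquadratic form $\sum_{i,j}c_{ij}(x_i'y_j')^2$ with $C=(c_{ij})=\begin{pmatrix}\kappa_3&\kappa_3&\kappa_2\\ \kappa_3&\kappa_3&\kappa_2\\ \kappa_1&\kappa_1&\kappa_4\end{pmatrix}$. For such a form, an $r$‑square representation is precisely a choice of vectors $\vw_{ij}\in\R^r$ with $\|\vw_{ij}\|_2^2=c_{ij}$, with $\vw_{ij}\perp\vw_{ij'}$ whenever $j\ne j'$ and $\vw_{ij}\perp\vw_{i'j}$ whenever $i\ne i'$, and with $\vw_{ij}\cdot\vw_{i'j'}+\vw_{ij'}\cdot\vw_{i'j}=0$ whenever $i\ne i'$, $j\ne j'$.

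For the upper bounds, the building blocks are $\mu_1\mu_2$ (one square); $\lambda_1\mu_2=(g_1^2+g_2^2)h^2=(g_1h)^2+(g_2h)^2$ (two squares, writing $\lambda_1=g_1^2+g_2^2$, $\mu_2=h^2$); $\lambda_1\lambda_2=(g_1g_1'-g_2g_2')^2+(g_1g_2'+g_2g_1')^2$ (two squares); and, by the four‑square identity $(g_1^2+g_2^2)(k_1^2+k_2^2+k_3^2)=(g_1k_1-g_2k_2)^2+(g_1k_2+g_2k_1)^2+(g_1k_3)^2+(g_2k_3)^2$, the product $\lambda_1\cdot R$ is a sum of four squares for any quadratic $R$ of rank $\le 3$. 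Grouping $P=\lambda_1(\kappa_3\lambda_2+\kappa_2\mu_2)+\mu_1(\kappa_1\lambda_2+\kappa_4\mu_2)$ then gives SOS rank $\le 4+3=7$ in general. If one of $\kappa_1,\kappa_2,\kappa_3$ vanishes the bound drops to five (e.g. for $\kappa_3=0$, $P=\mu_1(\kappa_1\lambda_2+\kappa_4\mu_2)+\kappa_2\lambda_1\mu_2$ is $3+2$ squares), and if $\kappa_4=0$ while $\kappa_1\kappa_2\kappa_3\ne 0$ the mirror grouping $P=\lambda_2(\kappa_3\lambda_1+\kappa_1\mu_1)+\kappa_2\lambda_1\mu_2$ gives $4+2=6$ squares. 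So the conjecture amounts to pushing $7$ down to $5$ uniformly.

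I expect this last step to fail on the face $\kappa_4=0$ (equivalently $(a,b,c)\in\mathrm{conv}(V_2,V_3,V_4)$, where condition (iv) is tight). There $c_{33}=\kappa_4=0$, so any PSD Gram matrix of a representing tuple has a vanishing $z_{33}$‑row, which removes four of the nine symmetric $2\times2$‑minor relations spanning the kernel of the Segre squaring map; the remaining candidate Gram matrices form only a $5$‑dimensional affine family inside $\mathrm{Sym}(\R^8)$, whereas the rank‑$\le 5$ locus there has codimension $6$, so a rank‑$5$ solution should not exist. This is borne out concretely at the centroid $(a,b,c)=(0,0,-\tfrac14)$, where $\kappa_1=\kappa_2=\tfrac32$, $\kappa_3=\tfrac34$, $\kappa_4=0$: using only the orthogonality and ``mixed'' relations that involve the index $(3,3)$, the four vectors $\vw_{13},\vw_{23},\vw_{31},\vw_{32}$ realizing $c_{13}=c_{23}=c_{31}=c_{32}=\tfrac32$ in any five‑square representation are forced to be mutually orthogonal and nonzero in $\R^5$; writing the remaining $\vw_{ij}$ in the orthonormal frame they determine, the leftover equations turn out to be infeasible. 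Hence this form has SOS rank exactly six --- a six‑square decomposition being
\[
P = \tfrac34(x_1'y_1'-x_2'y_2')^2 + \tfrac34(x_1'y_2'+x_2'y_1')^2 + \tfrac32(x_1'y_3')^2 + \tfrac32(x_2'y_3')^2 + \tfrac32(x_3'y_1')^2 + \tfrac32(x_3'y_2')^2
\]
--- and so Conjecture~\ref{conj:sos-rank-3x3-sym} is false as stated. The natural corrected statement is that the SOS rank of a $3\times 3$ symmetric PSD biquadratic form is at most six; to prove \emph{that}, the only regime left open after the computations above is the strict interior $\kappa_1\kappa_2\kappa_3\kappa_4\ne 0$, where the $4+3$ split must be replaced by a genuinely new six‑square identity --- and I would not be surprised if even six fails there, so that the sharp uniform bound is seven.
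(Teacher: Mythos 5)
The statement is a \emph{conjecture}; the paper offers no proof of it, so there is nothing to compare your proposal against. What you have produced is in fact a (correct, as far as I can verify) \emph{disproof}. Your reduction is right: writing $P$ through the invariants and substituting $\|\vx\|^2 = \lambda_1 + \mu_1$, $s_x^2 - \|\vx\|^2 = 2\mu_1 - \lambda_1$ (and likewise for $\vy$) gives $P = \kappa_3\lambda_1\lambda_2 + \kappa_2\lambda_1\mu_2 + \kappa_1\mu_1\lambda_2 + \kappa_4\mu_1\mu_2$ with $\kappa_i$ exactly the left-hand sides of Theorem~\ref{thm:monic-psd}(i)--(iv) at $m=n=3$; the orthonormal change of coordinates aligning the third axis with $\mathbf 1/\sqrt 3$ indeed diagonalizes $P$ to $\sum c_{ij}(x_i'y_j')^2$ with your $C$-matrix, and orthonormal changes of variables preserve SOS rank. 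At the centroid $(a,b,c) = (0,0,-\tfrac14)$ one gets $\kappa_1=\kappa_2=\tfrac32$, $\kappa_3=\tfrac34$, $\kappa_4=0$, and this point does lie in the PSD tetrahedron.

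The rank lower bound is the crux, and your algebraic argument there is sound even though the preceding dimension count is only heuristic. With $\vw_{33}=0$, the four mixed relations $\vw_{ij}\cdot\vw_{33}+\vw_{i3}\cdot\vw_{3j}=0$ together with the same-row/same-column orthogonalities force $\vw_{13},\vw_{23},\vw_{31},\vw_{32}$ to be nonzero and pairwise orthogonal, hence to span a $4$-dimensional subspace. Expanding $\vw_{11},\vw_{12},\vw_{21},\vw_{22}$ in the resulting frame of $\R^5$, the five mixed relations not involving $(3,3)$ reduce (after the four linear substitutions $c_1=-a_1$, $d_1=-b_1$, $b_2=-a_2$, $d_2=-c_2$) to the system $a_1b_1+a_3b_3=0$, $a_2c_2+a_3c_3=0$, $a_2c_2+b_3d_3=0$, $a_1b_1+c_3d_3=0$, $a_3d_3+b_3c_3=0$, which forces at most one of $a_3,b_3,c_3,d_3$ to be nonzero; combined with the four equal-norm equations one is driven to $a_1=b_1=a_2=c_2=0$ and a vanishing $\|\vw_{11}\|$ or $\|\vw_{12}\|$, a contradiction. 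I re-derived this case analysis and it checks out. Your six-square decomposition expands correctly. So the form at $(0,0,-\tfrac14)$ has SOS rank exactly $6$, and Conjecture~\ref{conj:sos-rank-3x3-sym} as stated is false; the bound must be at least six. (Whether the corrected bound is six or seven in the interior $\kappa_1\kappa_2\kappa_3\kappa_4\neq 0$ you leave open, which is fine.) This is a genuine and useful observation that the authors would want to incorporate.
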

	
	\subsection{Illustrative Special Cases}
	For concreteness we record the explicit inequalities and vertices for two cases that originally motivated the general result.
	
	\subsubsection{The $4 \times 3$ Case}
	
	\begin{Thm}\label{thm:43-psd-cone}
		Let $P$ be a $4 \times 3$ monic symmetric biquadratic form as in \eqref{equ:monic_sbq} with $m=4$, $n=3$. Then $P$ is PSD if and only if the coefficients $(a,b,c)$ satisfy the four linear inequalities
		\begin{align}
			1 - b + 3(a - c) &\geq 0, \label{eq:43-i} \\
			1 - a + 2(b - c) &\geq 0, \label{eq:43-ii} \\
			1 - a - b + c &\geq 0, \label{eq:43-iii} \\
			1 + 3a + 2b + 6c &\geq 0. \label{eq:43-iv}
		\end{align}
		These inequalities define a tetrahedron with vertices
		\[
		V_1 = (1,1,1), \quad
		V_2 = \left(-\tfrac{1}{3}, -\tfrac{1}{2}, \tfrac{1}{6}\right), \quad
		V_3 = \left(-\tfrac{1}{3}, 1, -\tfrac{1}{3}\right), \quad
		V_4 = \left(1, -\tfrac{1}{2}, -\tfrac{1}{2}\right).
		\]
	\end{Thm}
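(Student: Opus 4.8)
The plan is to obtain this statement as a direct specialization of Theorems \ref{thm:monic-psd} and \ref{thm:psd-polyhedron} to the case $m=4$, $n=3$, with an independent check of the vertex computation for concreteness. First I would substitute $m-1=3$ and $n-1=2$ into conditions (i)--(iv) of Theorem \ref{thm:monic-psd}: condition (i) becomes $1-b+3(a-c)\ge 0$, condition (ii) becomes $1-a+2(b-c)\ge 0$, condition (iii) is unchanged, and condition (iv) becomes $1+3a+2b+6c\ge 0$. These are precisely \eqref{eq:43-i}--\eqref{eq:43-iv}, so the only remaining point for the ``if and only if'' is to dispose of the nonlinear condition (v) of Theorem \ref{thm:monic-psd}.

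For that I would invoke Theorem \ref{thm:psd-polyhedron} with $m=4$, $n=3$, which already establishes that condition (v) is redundant and that the PSD parameter set coincides with the polyhedron cut out by (i)--(iv). As an optional self-contained check in this small case, one can note that condition (v) is active only when $c\ne 0$, $a/c\in[-2,1)$, and $b/c\in[-3,1)$; restricting to the sub-polyhedron where these hold, one verifies that $1-ab/c\ge 0$ is implied by \eqref{eq:43-i}--\eqref{eq:43-iv}, which reduces to a finite elimination in three variables. Either way, the PSD region is exactly the polyhedron defined by the four displayed inequalities.

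It remains to identify the vertices. Substituting $m=4$, $n=3$ into the vertex formulas of Theorem \ref{thm:psd-polyhedron} yields $V_1=(1,1,1)$, $V_2=(-\tfrac13,-\tfrac12,\tfrac16)$, $V_3=(-\tfrac13,1,-\tfrac13)$, and $V_4=(1,-\tfrac12,-\tfrac12)$, matching the statement. To confirm this directly I would solve each of the four $3\times 3$ linear systems obtained by turning three of \eqref{eq:43-i}--\eqref{eq:43-iv} into equalities, check that the unique solution satisfies the remaining inequality, and verify that the four points are affinely independent, so the polyhedron is a genuine nondegenerate tetrahedron. I do not anticipate a real obstacle here: the result is bookkeeping once Theorems \ref{thm:monic-psd} and \ref{thm:psd-polyhedron} are in hand, and the only mildly delicate point — the vanishing of the nonlinear cut (v) — is exactly what Theorem \ref{thm:psd-polyhedron} supplies, with the explicit vertex SOS decompositions of Proposition \ref{prop:sos-vertices} available as an extra confirmation.
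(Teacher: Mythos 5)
Your proposal is correct and matches the paper's proof, which is simply to specialize Theorem \ref{thm:psd-polyhedron} (and implicitly Theorem \ref{thm:monic-psd}) to $m=4$, $n=3$. The extra vertex computations and the optional elimination check you suggest are harmless confirmations, not a different route.
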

	
	\begin{proof}
		Specialize Theorem \ref{thm:psd-polyhedron} to $m=4$, $n=3$.
	\end{proof}
	
	\subsubsection{The $4 \times 4$ Case}
	
	\begin{Thm}\label{thm:44-psd-cone}
		Let $P$ be a $4 \times 4$ monic symmetric biquadratic form as in \eqref{equ:monic_sbq} with $m=n=4$. Then $P$ is PSD if and only if the coefficients $(a,b,c)$ satisfy the four linear inequalities
		\begin{align}
			1 - b + 3(a - c) &\geq 0, \label{eq:44-i} \\
			1 - a + 3(b - c) &\geq 0, \label{eq:44-ii} \\
			1 - a - b + c &\geq 0, \label{eq:44-iii} \\
			1 + 3a + 3b + 9c &\geq 0. \label{eq:44-iv}
		\end{align}
		These inequalities define a tetrahedron with vertices
		\[
		V_1 = (1,1,1), \quad
		V_2 = \left(-\tfrac13, -\tfrac13, \tfrac19\right), \quad
		V_3 = \left(-\tfrac13, 1, -\tfrac13\right), \quad
		V_4 = \left(1, -\tfrac13, -\tfrac13\right).
		\]
	\end{Thm}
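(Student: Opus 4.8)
The plan is to obtain Theorem~\ref{thm:44-psd-cone} as a direct specialization of Theorem~\ref{thm:psd-polyhedron} (and the underlying Theorem~\ref{thm:monic-psd}) to the case $m=n=4$. First I would substitute $m-1=3$ and $n-1=3$ into the four linear inequalities (i)--(iv) of Theorem~\ref{thm:monic-psd}: condition (i) becomes $1-b+3(a-c)\ge 0$, condition (ii) becomes $1-a+3(b-c)\ge 0$, condition (iii) is unchanged, and condition (iv) becomes $1+3a+3b+9c\ge 0$, which are exactly \eqref{eq:44-i}--\eqref{eq:44-iv}. Since Theorem~\ref{thm:psd-polyhedron} already establishes that the nonlinear condition (v) of Theorem~\ref{thm:monic-psd} is redundant for all $m,n\ge 2$, the PSD region in the parameter space $(a,b,c)$ is cut out entirely by these four linear inequalities, so the ``if and only if'' statement follows at once.

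Second, I would plug $m=n=4$ into the vertex formulas from Theorem~\ref{thm:psd-polyhedron}. This gives $V_1=(1,1,1)$ unchanged, $V_2=\left(-\tfrac13,-\tfrac13,\tfrac19\right)$, $V_3=\left(-\tfrac13,1,-\tfrac13\right)$, and $V_4=\left(1,-\tfrac13,-\tfrac13\right)$, matching the claimed vertices. As a self‑contained cross‑check (and so that this subsection reads without chasing back to the general proof), I would verify directly that each of $V_1,\dots,V_4$ satisfies all four inequalities \eqref{eq:44-i}--\eqref{eq:44-iv} with at least three of them tight, and that the system forces $a,b,c\in[-1,1]$; hence the feasible set is a bounded polyhedron with exactly these four extreme points, i.e.\ a tetrahedron, the four vertices being affinely independent.

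There is essentially no mathematical obstacle here: the result is bookkeeping on top of Theorem~\ref{thm:psd-polyhedron}. The only points that merit a moment's care are (a) confirming that no two of the specialized vertices collapse --- here $V_3\ne V_4$ even though $m=n$, because the coordinate equal to $1$ occupies different slots --- so the polyhedron remains genuinely three‑dimensional; and (b) copying the coefficient of $c$ in \eqref{eq:44-iv} correctly as $(m-1)(n-1)=9$ rather than $mn=16$. Both are routine, but they are the kind of slip worth guarding against when specializing a general formula.
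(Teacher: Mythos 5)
Your proposal matches the paper's proof exactly: the paper also obtains Theorem~\ref{thm:44-psd-cone} by specializing Theorem~\ref{thm:psd-polyhedron} to $m=n=4$. The additional cross-checks you mention (tightness at vertices, non-collapse of $V_3$ and $V_4$, the $(m-1)(n-1)$ coefficient) are reasonable sanity checks but not part of the paper's one-line argument.
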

	
	\begin{proof}
		Specialize Theorem \ref{thm:psd-polyhedron} to $m=n=4$.
	\end{proof}
	
	The SOS decompositions for these vertices are exactly the ones given in Proposition \ref{prop:sos-vertices} with the corresponding $m,n$. Consequently, every $4 \times 3$ or $4 \times 4$ PSD symmetric biquadratic form is SOS, as guaranteed by Theorem \ref{thm:monic-sym-sos}.
	
	\section{Further Remarks}
	
	This work has advanced the understanding of the relationship between positive semi-definiteness and sum-of-squares representations for structured biquadratic forms. Theorem \ref{thm:main1} establishes that diagonally dominated symmetric biquadratic tensors are not only PSD but also SOS. This provides a large and easily verifiable class of biquadratic forms for which nonnegativity is equivalent to an SOS decomposition, making the property computationally tractable via semidefinite programming.
	
	We have also introduced and formulated concrete conjectures regarding the SOS property for two other important classes: symmetric M-biquadratic tensors (Conjecture \ref{conj:M-sos}) and symmetric B$_0$-biquadratic tensors (Conjecture \ref{conj:B0-sos}). We believe Conjecture \ref{conj:M-sos} is likely true and may be provable by forging a deeper connection between the M-eigenvalue structure of these tensors and the spectral properties of their matrix unfoldings. In contrast, the weaker conditions defining B$_0$-tensors seem insufficient to guarantee an SOS decomposition with current techniques, making Conjecture \ref{conj:B0-sos} a more challenging and potentially false statement.
	
	A particularly promising direction for future research lies in the study of symmetric biquadratic forms. By focusing on forms that are symmetric in the variables $\mathbf{x}$ and/or $\mathbf{y}$, we navigate away from known pathological counterexamples like Choi's, which lacks such symmetry. \textbf{Theorem \ref{thm:monic-sym-sos} establishes that every PSD symmetric biquadratic form (for any dimensions $m,n$) is SOS}, completely resolving the existence question for this class. The PSD cone of monic symmetric forms is always a convex polyhedron (a tetrahedron) whose vertices admit explicit SOS decompositions. Conjecture \ref{conj:sos-rank-3x3-sym}, which posits that every $3 \times 3$ symmetric PSD form has SOS rank at most five, is a natural and significant next target. A positive resolution would provide a precise quantitative understanding of the SOS complexity for this class, marking substantial advancement on a modern analogue of Hilbert's 17th problem for a structured class of polynomials.
	
	\medskip
	
	\noindent\textbf{Acknowledgement}
	We are thankful to Professor Giorgio Ottaviani, who proposed to study symmetric biquadratic forms.
	This work was partially supported by Research Center for Intelligent Operations Research, The Hong Kong Polytechnic University (4-ZZT8), the National Natural Science Foundation of China (Nos. 12471282 and 12131004), the R\&D project of Pazhou Lab (Huangpu) (Grant no. 2023K0603), the Fundamental Research Funds for the Central Universities (Grant No. YWF-22-T-204), and Jiangsu Provincial Scientific Research Center of Applied Mathematics (Grant No. BK20233002).
	
	\medskip
	
	\noindent\textbf{Data availability}
	No datasets were generated or analysed during the current study.
	
	\medskip
	
	\noindent\textbf{Conflict of interest} The authors declare no conflict of interest.

\end{document}